\newcommand{\cum}{\operatorname{cum}}
\newcommand{\eqd}{\stackrel{Law}{=}}
\newcommand{\mS}{\mathcal{S}}
\newcommand{\sign}{\operatorname{sign}}
\newcommand{\T}{\mathcal{T}}
\newcommand{\E}{{\mathbb E}}
\renewcommand{\i}{\mathrm{i}}
\def\R{\mathbb{R}}
\newcommand{\mean}{\operatorname{mean}}
\renewcommand{\P}{{\mathbb P}}
\newcommand{\B}{\mathcal{B}}
\renewcommand{\kappa}{\varkappa}
\newcommand{\I}{{\mathbb I}}
\newtheorem{thm}{Theorem}
\newcommand{\vX}{\vec{X}}
\newcommand{\vx}{\vec{x}}
\newcommand{\vb}{\vec{b}}
\newcommand{\vu}{\vec{u}}
\newcommand{\vmu}{\vec{\mu}}
\newcommand{\vs}{\vec{s}}
\newcommand{\vy}{\vec{y}}
\newcommand{\vT}{\vec{\T}}
\newcommand{\vS}{\vec{S}}
\newcommand{\vQ}{\vec{Q}}
\newcommand{\vD}{\vec{D}}
\newcommand{\vH}{\vec{H}}
\newcommand{\vrho}{\vec{\rho}}
\newlength\fboxseph
\newlength\fboxsepv
\begin{document}
\title{Multivariate asset-pricing model based on subordinated stable processes
\thanks{The study has been funded by the Russian Academic Excellence Project "5-100".
}
}
\titlerunning{Multivariate asset-pricing model based on subordinated stable processes}

\author{
Vladimir Panov
and 
Evgenii Samarin
}

\authorrunning{V.Panov, E.Samarin} 

\institute{International Laboratory of Stochastic Analysis and its Applications\\National Research University Higher School of Economics\\
            Shabolovka, 26, Moscow,  119049 Russia\newline\\                         
	\email{vpanov@hse.ru, samarinwk@gmail.com}
}

\date{Received: \today}
\maketitle

\begin{abstract}
In this paper we consider a multidimensional time-changed stochastic process in the context of asset-pricing modelling.  The proposed model is constructed from  stable processes, and its construction is based  on two popular concepts - multivariate subordination and L{\'e}vy copulas. From theoretical point of view, our main result is Theorem~\ref{thm}, which yields a simulation method from the considered class of processes. Our empirical study shows that the model represents the correlation between assets quite well. Moreover, we provide some evidence that this model is more appropriate for describing stock prices than classical time-changed Brownian motion, at least if the cumulative amount of transactions  is used for a stochastic time change.  





\keywords{Modelling asset-price dynamics \and Time-changed processes \and Stable processes \and Multivariate subordination \and L{\'e}vy copula \and Correlation structures}
\subclass{60G51 \and 60G52}
\end{abstract}
\section{Introduction}
\label{intro}
Stochastic time change is a well-used tool for the construction of  probabilistic models which are able to  
represent the so-called stylised features of stock prices. 
From mathematical point of view, the main idea is to replace the deterministic time \(t\) of a  stochastic process \(L_t\) (usually - of a L{\'e}vy process)  by a nondecreasing nonnegative process \(\T(s)\).  As a result, one obtains a process \(X_s=L_{\T(s)}\), which is referred to as \textit{a time-changed process}.

The economical interpretation of this operation  is based on the idea that  the ``business'' time \(\T(s)\) may run faster than the physical time in some periods, for instance, when the amount of transactions is high, see Clark (\citeyear{Clark}), An{\'e} and Geman (\citeyear{AneGeman}), Veraart and Winkel (\citeyear{VW}).   For instance, An{\'e} and Geman (\citeyear{AneGeman}) show that the stock prices can be modelled by a time-changed process with \(L\) equal to the Brownian motion with drift, and \(\T\) equal to the cumulative number of trades till time \(s.\) This choice of \(\T\) is intuitively correct, and leads to a good understanding of the model, see Tauchen and Pitts (\citeyear{TP}) and Andersen (\citeyear{andersen}).

As for the choice of  a process \(L\) as a Brownian motion with or without drift, it is mainly based on the Monroe theorem   (Monroe, \citeyear{Monroe}), which  says that the class of time-changed Brownian motions coincides with the class of all semimartingales.  Nevertheless, the Monroe theorem assumes that the processes \(\T\) and \(L\) may be dependent, and this drawback makes the statistical analysis almost impossible. 
 Monroe's theorem is a significant theoretical fact, which shows the importance of the considered class of models, but 
it is  almost useless for financial modelling. 

It would be  natural  to consider more general class of processes for \(L\) - for instance, L{\'e}vy processes, but under the assumption that \(L\) and \(\T\) are independent.  Moreover, in this paper we will show that  the application of subordinated jump-type processes for asset-price modelling has several advantages in comparison with classical time-changed Brownian motions. Theoretically this can be explained by the observation that if the trajectories of the process \(L \) are discontinuous, then rapid changes  in  log-returns are made not only due to the jumps in  the number of trades (as in the time-changed Brownian motion), but also due to stochastic factors, which are incorporated in \(L.\)
In fact,   most rapid changes in prices  can be explained by factors, which are not related to a particular asset at all - e.g., tweets of the president of the U.S., terrorist activity, etc.   All of these stochastic factors can be included in the model by considering jump-type processes for \(L\). 

The class of subordinated L{\'e}vy processes is analytically tractable, and is able to reproduce the dynamics of financial time series, see Barndorff-Nielsen and Shiryaev (\citeyear{BNS}).  But, as it is shown in Figueroa-L{\'o}pez (\citeyear{FL}) as well as in Belomestny and Panov (\citeyear{panov2013d}), the statistical inference for this class of models is rather complicated. In this paper, we propose to restrict the admissible class of processes for \(L\) to the class of  \textit{\(\alpha-\) stable processes} \(S_{t}\), which are defined as L\'evy processes with the following property: for any \(a>0\) there exists some function \(b: \R_+ \to \R\) such that 
\begin{eqnarray}
\label{sat}
\bigl\{ S_{at} \bigr\}_{t \geq0}  \eqd  \bigl\{ a^{1/\alpha} 
S_t+b(t) \bigr\}_{t \geq0}.
\end{eqnarray}
with  \(\alpha \in (0,2]\).  The comprehensive study of this class is given in brilliant books by Samorodnitsky and Taqqu (\citeyear{ST}), Bertoin (\citeyear{Bertoin}), Sato (\citeyear{Sato}).


The application of stable processes for asset-pricing modelling was discussed in various papers, starting from classical works by Mandelbrot (\citeyear{man}),  Mandelbrot and Taylor (\citeyear{MT}). These models are able to represent many stylized facts of financial data, see Cont (\citeyear{cont}), and therefore can be used in a wide range of financial applications; the list of references is collected by Nolan (\citeyear{NolanBib}).  The only difficulty, which can arise by using the models based on stable processes, is that these models typically have infinite second moment. Nevertheless, this drawback is not crucial, see, e.g.,  
Mittnik and Rachev (\citeyear{MR}), 
Grabchak and Samorodnitsky (\citeyear{GS}).

An important question related to the time-changed processes  is how to generalise the idea of stochastic time change to the multivariate case. The first idea is to change the time \(t\) in all components of a L{\'e}vy process \(\vec{L}(t)= \left( L_1(t), ..., L_d(t) \right)\)  by the same stochastic process \(T(s)\). This approach is rather natural, but the resulting model is difficult to interpret in the context of price modelling, since the business time is different for different assets.  Another idea is to take \textit{a multivariate subordinator} - a L\'evy process \( \vec{\T}(s)=\left(\T_{1}(s), ...,\T_{d}(s) \right)\), such that each component is a one-dimensional subordinator, and change the time in \(L_i(t)\) by \(\T_i(s), \; i=1..d:\)
\begin{eqnarray}\label{MMM}
\vec{X}(s) = \left(
L_1 (\T_1(s)), ..., L_d (\T_d(s))
\right).
\end{eqnarray}
This concept, known as  the  multivariate subordination, was introduced by  Barndorff-Nielsen, Pedersen and Sato (\citeyear{BNPedSato}) for the case when the processes \(L_1,..., L_d\) are independent. In this paper, we consider the model \eqref{MMM} for the situation when \(\vec{L}(t)\) is a multivariate stable process (with possibly dependent components) and \(\vec{\T}(s)\) is a multivariate subordinator. Dependence between the components of \(\vec{\T}(s)\) is described via \textit{a  L{\'e}vy copula} - another popular concept in the theory of jump-type processes.

One of the contributions of this paper is a multivariate series representation for the considered class of processes, which yields a method for simulation from  the model. Our proof  is based on the paper by Rosi{\'n}sky (\citeyear{Rosinsky}), which has been already used for some previous results of this type, see Section~6.5 from the book by Cont and Tankov (\citeyear{ContTankov}). Nevertheless, to the best of our knowledge, these techniques were never applied to the models based on the  subordination of the multivariate stable processes. Simulation method developed in this paper allows to reproduce the processes with the same probabilistic structure as original process, and the simulated data can be efficiently used for testing various trading strategies.

The rest of the paper is organised as follows.  The next section is devoted to an overview of the most important properties of stable processes. In Section~\ref{1d}, we provide numerical example, which illustrates the advantages of this model in comparison with the  classical subordinated Brownian motion for the one-dimensional case. Section~\ref{dep} presents possible dependence structures for the multivariate models based on stable processes. In Section~\ref{MS}, we formulate some theoretical results related to the class of multivariate subordinated stable processes. In particular, Theorem~\ref{thm} yields a method for simulation from the considered model. In Section~\ref{ea}, we show an application of this result to stock returns, and also describe the estimation scheme for all parameters of the considered model. Appendix contains the proof of Theorem~\ref{thm}.

%
%
%

\section{ Stable processes}\label{stable2}

\subsection{Univariate stable processes}
In the one-dimensional case, the stable process \(S_t\)  can be parametrised by four real numbers.  In what follows, we will employ the notation \(\mS_{\alpha} (\sigma,
\beta, \mu)\), which means that 
the characteristic function of \(S_t\) can be represented in the following form
\begin{eqnarray}\label{eqq}
\phi_{S_t} (u) = \E
\left[
  e^{\i u S_t}
\right]
&=&
\exp\Bigl\{t 
\Bigl(
\i u \mu - 
\sigma^{\alpha}|u|^{\alpha}
\left(1 - \i \beta \sign(u) \theta(u)
\right) 
\Bigr) 
\Bigr\}, 
\\ \nonumber
\mbox{with} \qquad
\theta(u) &=&
  \begin{cases}
\tan\frac{\pi\alpha}{2}, &\text{if $\alpha\ne 1$;}\\
 - \frac{2}{\pi} \log|u|, &\text{if $\alpha = 1$},
\end{cases}
\end{eqnarray}
where \(\sigma \in \R_+\) is a scale parameter, \(\mu \in \R\) is a drift, and \(\beta \in [-1,1]\) is a skewness parameter. 
Note that in this parametrisation,  \eqref{sat} can be specified as 
\begin{eqnarray}\label{SSS}
\bigl\{ S_{at} \bigr\}_{t \geq0}  \eqd  \bigl\{ a^{1/\alpha} 
\left( S_t - \mu t \right) + a \mu t \bigr\}_{t \geq0},
\end{eqnarray}
provided \(\alpha \ne 1.\)

The L{\'e}vy measure of a stable process has a density equal to 
\begin{eqnarray}\label{Levyy}
s(x) = \frac{A}{x^{1+\alpha}} \cdot \I\left\{ x>0 \right\}
+
\frac{B}{|x|^{1+\alpha}} \cdot \I\left\{ x<0 \right\},
\end{eqnarray}
where \(A,B \geq 0\).  Therefore, the jump activity of the stable process  essentially depends on the parameter \(\alpha\), which coincides with the Blumenthal-Getoor index. Figure~\ref{Stable} illustrates the typical trajectories of the one-dimensional  \(\alpha-\)stable process depending on this parameter: when \(\alpha\) is close to 2, the process behaves similar to a Brownian motion, and when \(\alpha\) is close to 0, it looks like a compound Poisson process.

\begin{figure*}
\begin{center}
\includegraphics[width=0.75\linewidth ]{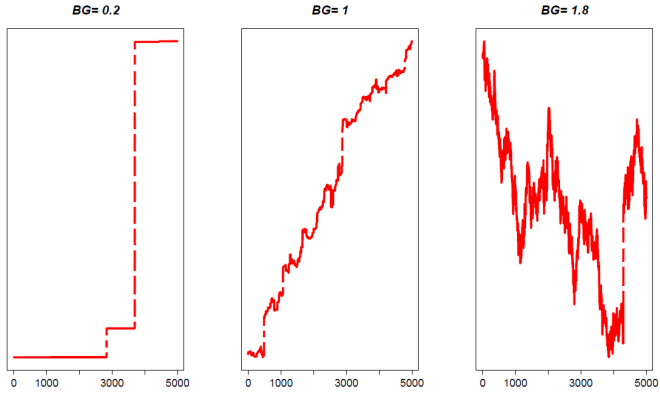}\caption{\label{Stable}Typical trajectories of an \(\alpha-\)stable process depending on the parameter \(\alpha.\)}
\label{fig1}
\end{center}
\end{figure*}

\subsection{Multivariate stable processes}
\label{MSP}
Multivariate stable processes \(\vS(t) \in \R^d\) can be defined as a L{\'e}vy  process such that \eqref{sat} is fulfilled with a function \(b: \R_+ \to \R^d\). The dependence between the components is described via \textit{the spectral measure} \(\Lambda\) on the unit  sphere \( \mathbb{S}_d\) in \(\R^d.\) A multidimensional analogue of \eqref{eqq} reads as 
 \begin{multline*}
\phi_{\vS(t)} (\vu) = \E
\left[
  e^{\i \langle \vu, \vS(t)\rangle}
\right]
\\=
 \exp
 \Bigl\{ t \Bigl(
  \i \langle\vu, \vmu\rangle 
  -\int_{ \mathbb{S}_d}
 |\langle\vu,\vs\rangle|^\alpha 
 \left(
 1 - \i \sign\left(
  \langle\vu,\vs\rangle
 \right) \theta \left(\langle\vu,\vs\rangle\right)
\right) \Lambda (d\vs)
\Bigr)\Bigr\},
 \end{multline*}
 where \(\vmu \in \R^d, \vu \in \R^d.\) For instance, if \(d=1\), then the measure \(\Lambda\) is supported on \(\{1,-1\},\) and the last formula reduces to \eqref{eqq} with \[\sigma= \Bigl( \Lambda(\{1\}) + \Lambda(\{-1\})\Bigr)^{1/\alpha}, \qquad \beta = \sigma^{-\alpha}\Bigl(\Lambda(\{1\}) -\Lambda(\{-1\})\Bigr).\] This measure determines also the L{\'e}vy measure, namely 
   \begin{equation}\label{eq:StablePrMeasure}
    \nu(B)=\int\limits_{\mathbb{S}_d}\int\limits_{\R_+}\frac{\mathbb{I}_{B}(r \vs)}{r^{1+\alpha}}\, dr \,\Lambda (d\vs)
  \end{equation}
  for all Borel sets $B\subset \R^{d}.$ 

Projection of a stable process to any vector \(\vu\in \R^d\) is  a univariate stable process \(\mS_{\alpha} (\sigma(\vu),
\beta(\vu), \mu(\vu))\), where the parameters \(\sigma(\vu),\beta(\vu), \mu(\vu) \)  are equal to 
 \begin{eqnarray*}
\sigma(\vu) &=&
\left(
\int_{\mathbb{S}_d} |\langle\vu, \vs\rangle|^\alpha \Lambda(d\vs)
\right)^{1/\alpha};\\
\beta(\vu) &=&\sigma(\vu)^{-\alpha} \int_{\mathbb{S}_d}|\langle\vu, \vs\rangle|^\alpha \sign(\langle\vu, \vs\rangle)\Lambda(d\vs);\\
\delta(\vu)&=&\begin{cases}\langle\vu, \vmu\rangle,& \alpha \ne 1\\\langle\vu, \vmu\rangle -\frac{2}{\pi}\int_{\mathbb{S}_d} \langle\vu, \vs\rangle \ln|\langle\vu, \vs\rangle|\Lambda(d\vs),&\alpha=1.
\end{cases}
 \end{eqnarray*}
Therefore, the spectral measure \(\Lambda\)   determines dependence structure among elements of the vector. 


\section{Asset-pricing in the one-dimensional case} \label{1d} 
Let us provide the following example. We consider  30-minutes Apple, Microsoft and GE (General Electric) stock prices traded on the Nasdaq over the period from October, 18, 2017, till May, 1,  2018. Denote for the interval number \(j=1,2,...,1658\),  the stock price by \(P_j\) and the number of trades by \(N_j\). We examine the model 
\begin{eqnarray}\label{AG}
\log \left(
  \frac{P_j}{P_0}
\right)
= 
S_{\cum(N_j)},
\end{eqnarray}
where \(S_t\) is an \(\alpha\)-stable process \(\mS_{\alpha} (\sigma,
\beta, \mu)\) and \(\cum(N_j)\) is a cumulative number of trades over the periods \(1, 2,.., j.\) Processes \(S_t\) and  \(\cum(N_j)\) are assumed to be independent.  In this study, we aim to estimate the parameters of the stable process \(S_t\)  and to show that  the choice  \(\alpha=2\) (corresponding to the model by An{\'e} and Geman \cite{AneGeman}) is not optimal. 

First, note that for any \(\alpha \in (1,2]\),
\begin{eqnarray*}
\E \left[\log \left(
  \frac{P_j}{P_{j-1}}
\right)
\right]
= 
\E\left[ S_ 1 \right] \cdot
\E \left[N_j\right] = \mu \E \left[N_j\right],
\end{eqnarray*}
and therefore the parameter \(\mu\)  can be estimated by
\begin{eqnarray*}
\hat{\mu}_n = \frac{
\mean\left(\log \left(
 P_j/P_{j-1}
\right)\right)
}{
\mean \left(
N_j
\right) 
},
\end{eqnarray*}
where \(\mean(\cdot)\) stands for the mean value of all available observations.

Second, we fit the parameters \(\alpha,\beta, \sigma\) using the following equality, which is a corollary from \eqref{SSS}:
\begin{eqnarray*}
\log \left(
  \frac{P_j}{P_{j-1}}
\right)
=
   Z_j N_j
^{1/\alpha} + \mu N_j,  \qquad j= 2, ..., 1658,
\end{eqnarray*}
where the values \(Z_2,..., Z_{1658}\) have the same distribution as  \((S_1-\mu).\) 
Taking different \(\alpha\) from the grid on \((1,2]\), we estimate \(\hat\beta_{\alpha}\) and \(\hat\sigma_{\alpha}\) from the data
\begin{eqnarray*}
\tilde{Z}_j = 
N_j^{-1/\alpha} 
\left( 
\log \left(
  \frac{P_j}{P_{j-1}}
\right)
-
\hat\mu_n N_j
\right),  \qquad j= 2, ..., 1658.
\end{eqnarray*}
 Finally, we choose the optimal values of the parameters by comparing the quality of the density estimators in terms of 
\begin{eqnarray*}
R(\alpha)=
\sum_{j=2}^{1658}
\left( p_\alpha (\tilde{Z}_j) -  \hat{p} (\tilde{Z}_j) \right)^2,
\end{eqnarray*}
where \(p_\alpha(x) \) is the density of the stable distribution \(\mS_{\alpha} (\hat\sigma_\alpha,
\hat\beta_\alpha, 0)\) and  \(\hat{p} (\cdot)\) is a kernel density estimator of the variables \(\tilde{Z}_2, \tilde{Z}_3,...\). 

The results can be visually analysed by the PP-plots and density plots in comparison with the results for the model of subordinated Brownian motion (\(\alpha=2\)), see Figures  \ref{fig:apl-ge} and \ref{fig:dens}. As can be seen on these figures, models with optimal values \(\alpha<2\) fit the data much better. 

Some technical remarks on this approach can be found in Section~\ref{ee4}. The numerical results of our estimation procedure are presented in Table~\ref{fig2}.

\begin{figure}
	\centering
	\includegraphics[width=0.75\linewidth]{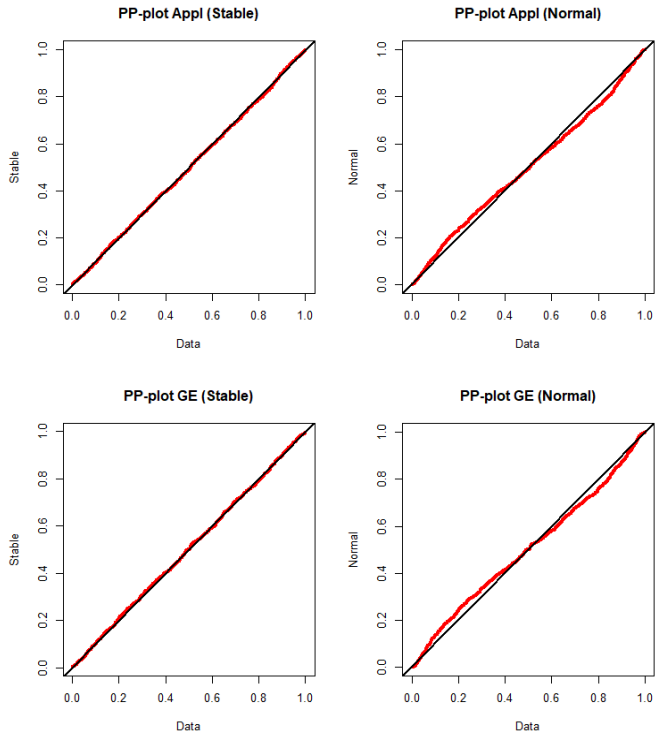}
	\caption{PP-plots for models of  subordinated stable processes  with \(\alpha<2\) (first column, \(\alpha=1.62\) for Apple and \(\alpha=1.83\) for GE) and subordinated Brownian motion (second column). Plots in the first row correspond to Apple stock prices, in the second row - to GE stock prices. }
	\label{fig:apl-ge}
\end{figure}

\begin{figure}
	\centering
	\includegraphics[width=0.75\linewidth]{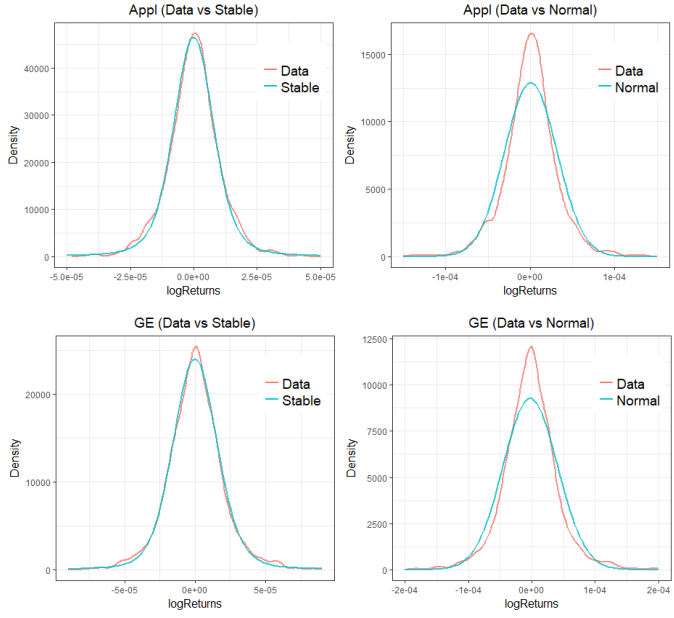}
	\caption{Density plots for models of  subordinated stable processes with \(\alpha<2\)  (first column, \(\alpha=1.62\) for Apple and \(\alpha=1.83\) for GE)  and subordinated Brownian motion (second column). Plots in the first row correspond to Apple stock prices, in the second row - to GE stock prices.}
	\label{fig:dens}
\end{figure}

\section{Dependence structures for stable processes and related models}
\label{dep} 
Description of the dependence structure between stochastic processes turns out to be  a rather complicated task, which cann't be solved by using only the copula approach. In fact, there is a common opinion, which was expressed by Thomas Mikosch in~\cite{Mikosch} as follows:  \textit{Copulas completely fail in describing complex space- time dependence structures}. Direct application of the copula approach to stochastic processes (that is, describing the dependence for any time moment) meets serious difficulties - for instance, it turns out that even in the simplest cases the copula also depends on \(t\), see \cite{Tankov}.

Therefore, one should introduce another object, which can describe the dependence in time-independent fashion. In the context of L{\'e}vy processes, a natural candidate is the L{\'e}vy copula.
\begin{definition} \label{defii}
A \(d\)-dimensional L{\'e}vy copula \(F\) is  a function from \(\bar{\R}^{d}\) to \(\bar{\R}\)  such that 
\begin{enumerate}
\item \(F\) is grounded, that is, \(F\left(\vec{u}\right) =0\) if \(u_{i}=0\) for at least one \(i=1,..,d\).
\item \(F\) is \(d\)-increasing.
\item \(F\) has uniform margins, that is, \(F^{(1)}(v) =...= F^{(d)}(v) =v\), where \[F^{(j)}(v) = \lim_{u_{1},...,u_{j-1},u_{j+1},u_{d} \to \infty} F\left(u_{1},...,u_{j-1},v,u_{j+1},...,u_{d}\right), \; j=1..d,\]
\item \(F\left(u_{1},..., u_{d}\right) \ne \infty \) for  \(\left( u_{1}, ..., u_{d} \right) \ne \left(\infty, ...,\infty\right)\).
\end{enumerate}
\end{definition}
Let us clarify the main aspects of the L{\'e}vy copula theory for the case of multivariate subordinators, that is, for the multivariate L{\'e}vy processes such that its components are non-decreasing (or, equivalently, non-negative). To do this, we need the notion of the tail integral,  which is defined for a process  \(\T\) from this class as
\begin{eqnarray*}
U\left(
  x_1,..., x_d
 \right) = \nu \left(
  \left[
  x_1, +\infty
  \right)
  \times...\times
    \left[
  x_d, +\infty
  \right)
\right), \qquad x_1,...,x_d > 0,
\end{eqnarray*}
where \(\nu\) is the L{\'e}vy measure of \(\T.\)
 An analogue of the Sklar theorem for ordinary copulas is the following statement:    for any multivariate subordinator \(\T\) with tail integral \(U\) and marginal tail integrals \(U_{1}, ..., U_{d}\), there exists a (positive) L{\'e}vy copula \(F\) such that  
\begin{eqnarray}
\label{Sklar}
	U (x_{1}, ..., x_{d}) = F \left( U_{1} (x_{1}), ...,  U_{d} (x_{d}) \right)
\end{eqnarray}
and vice versa, for any L{\'e}vy copula \(F\) and any one-dimensional L{\'e}vy process with tail integrals \(U_{1},  ..., U_{d}\)  there exists a \(d\)-dimensional L{\'e}vy process with tail integral \(U\) given by \eqref{Sklar} and marginal tail integrals \(U_{1}, ..., U_{d}\). This notion of tail integral as well as the Sklar theorem for L{\'e}vy copulas can be generalized for arbitrary L{\'e}vy processes, but its practical usage raises a lot of questions -  for instance, simulation of a multivariate L{\'e}vy process with given  L{\'e}vy copula is rather complicated, see \cite{ContTankov}. 


Returning to the multivariate stable processes,  it would be a worth mentioning that the dependence between components of a stable processes can be characterised via the spectral measure \(\Lambda\) on the unit sphere, see Section~\ref{stable2}. A crucial disadvantage of this method is that  this way of describing the dependence leads to a rather narrow class of models, because the resulting process in this case will be also stable. Different aspects of dependence structures for the multivariate stable distributions are discussed in Chapter~4 from \cite{ST}. 

In the next section we present a model, which is based both on spectral measures and L{\'e}vy copulas.


\section{Multivariate subordination of stable processes}\label{MS} 
Consider  a \(d-\)dimensional stable process \(\vS(t)=\left(S_{1}(t), ..., S_{d}(t)\right)\). Denote the characteristic exponent of the \(j-\)th component by 
\begin{eqnarray*}
\varphi_{S_j}(v) := \log \E\left[
  e^{\i  u S_j (1) }
\right] &=&
\i  u \mu_j
-\sigma_j^{\alpha_j}|u|^{\alpha_j}\left(1-i\beta_j\,\mathrm{sgn}(u)\,\theta(u_j) \right), \quad j=1..d,
\end{eqnarray*}
see Section~\ref{stable2} for notation. Denote the L{\'e}vy measure of \(S_j\)  by \(\nu_j(x)\).

Let  \(\vec{\T}(s)=\left(\T_{1}(s), ...,\T_{d}(s) \right)\) be a \(d\)-dimensional subordinator, that is, a L{\'e}vy process in \(\R^{d}\) such that its components \(\T_{1}, ..., \T_{d}\) are one-dimensional subordinators. Denote the Laplace exponent of the process \(\vT\) by
 \begin{eqnarray}\label{laplace}
\psi_{\T}(\vu) := \log \E\left[
  e^{\langle \vT (1), \vu \rangle}
\right]=
		 \langle \vrho, \vu \rangle  +
		\int_{\R} \left( e^{\langle \vu, \vx \rangle} -1 \right)\eta(d\vx),
\qquad \vu \in \R^d,	  
\end{eqnarray}
where  \(\vrho = \left(\rho_{1}, ...,\rho_{d}\right)\in \R^{d}\), \(\rho_{i} \geq 0,\;  i=1,...,d\), and  \(\eta\)  is a L{\'e}vy measure in \(\R_{+}^{d}\).  In other words, the L{\'e}vy triplet of \(\vT(s)\) under zero truncation function is \(\left(
  \vrho, 0, \eta
\right)\).

Assume  that for any \(i=1,..,d,\) \(\T_{i}(s)\) and \(S_{i}(s)\) are independent, and  define the multivariate subordinated process as
\begin{eqnarray}
\label{mainn}
 \vX (s) = \Bigl( X_{1}(s), ..., X_{d} (s) \Bigr) := \Bigl( S_{1}(\T_{1}(s)), ..., S_{d}(\T_{d}(s)) \Bigr).
\end{eqnarray}
 It would be a worth mentioning that \(\vX(s)\) is a L{\'e}vy processes. Its L{\'e}vy triplet can be represented via  the L{\'e}vy triplets of the processes \(\vS\) and \(\vT,\) see Appendix~\ref{app1}.

 Before we will formulate our main result, it would be important to note that for any \(v \geq 0\),  \(\widetilde{F}(u_1,..., u_{d-1}| v) = \partial F (u_1,..., u_{d-1}, v) / \partial v\) is a distribution function on \(\R_{+}^{d-1}\). If \(F\) is 2-dimensional, this result is given as  Lemma~5.3 in \cite{ContTankov}; the proof for the general, \(d-\)dimensional case, follows the same lines.
  \begin{thm}
 \label{thm}
Consider  the model~\eqref{mainn}, where  \(S_1,...S_d\) are  independent stable processes and  \(\vec{\T}(s)=\left(\T_{1}(s), ...,\T_{d}(s) \right)\) is a \(d\)-dimensional subordinator  (with possibly dependent components). Assume that 
the L{\'e}vy measure  \(\eta\) of the  process \(\T\) satisfies \[\int_{|\vx| \leq 1} |\vx|^{1/2} \eta(d\vx) < \infty.
\] and \(\vrho = 0\), see \eqref{laplace} for notation.

Denote by  \(F(u_{1},...,u_{d})\) a positive L{\'e}vy copula between \(\T_{1}(s), .., \T_{d}(s)\). Moreover, assume that  \begin{enumerate}
\item [(A1)] \(F(u_{1},...,u_{d})\) is continuous and the mixed derivative \(\partial^{d} F (u_{1},...,u_{d})/ \partial u_{1}... \partial u_{d}\) exists in \(\R_{+}^{d}\); in other words, the distribution function \(\widetilde{F}(u_1,..., u_{d-1}| v)\) is absolutely continuous for any \(v \geq 0;\)
\item [(A2)] there exist  functions \(h_1, ..., h_{d-1}: \R \times \R_+ \to \R\) and random variables \(\xi_1, ..., \xi_{d-1}\) such that 
\begin{eqnarray*}
\P\left\{
h_1 (\xi_1, v) \leq u_1, ..., 
h_{d-1} (\xi_{d-1}, v) \leq u_{d-1} 
\right\} 
= \widetilde{F}(u_1,..., u_{d-1}| v).
\end{eqnarray*} 
\end{enumerate}
Then 
	\[\vec{X}(s) \eqd  \vec{Z}(s), \qquad \forall s \in [0,1],\]	
where the  \(d\)-dimensional stochastic process \(\vec{Z}(s)= \left( Z_{1} (s), ...,  Z_{d}(s)\right)\) is defined as follows: 
\begin{eqnarray}
\nonumber
		Z_{k} (s)  &:=&  \sum_{i=1}^{\infty}  \left[  \left( G^{(k)}_{i} - \mu_i \right) \left(
		 U_{k}^{(-1)}\left(
	h_{k} (Q^{(k)}_{i}, \Gamma_{i} )
		\right)
\right)^{1/ \alpha_k}  
\right.\\&&
\left.
\hspace{3cm}
+
\mu_i U_{k}^{(-1)}\left(
	h_{k} (Q^{(k)}_{i}, \Gamma_{i} )
		\right) 
		\right]
 \I\left\{ R_{i} \leq s \right\} 
\end{eqnarray}
for \(k=1..(d-1)\), and 
\begin{eqnarray}
\nonumber
		Z_{d} (s)  &:=&  \sum_{i=1}^{\infty}  \left[  \left( G^{(d)}_{i} - \mu_i \right) \left(
		 U_{d}^{(-1)}\left(
\Gamma_i
		\right)
\right)^{1/ \alpha_k}  
+
\mu_i U_{d}^{(-1)}\left(
\Gamma_i
		\right) 
		\right]
 \I\left\{ R_{i} \leq s \right\},\\
 &&\label{res2}
\end{eqnarray}
and
 \begin{itemize}
 \item \(U_{1}, ..., U_{d}\)  are tail integrals of the subordinators \(T_{1}, ..., T_{d}\) resp.,  and \newline
 \(U_{1}^{(-1)}, ..., U_{d}^{(-1)}\) are their generalized inverse functions, that is,
 \begin{eqnarray*}
	U_{i}^{(-1)} (y) =  \inf \left\{ 
		x>0: \quad U_{i}(x) <y 
	\right\}, \quad i=1..d, \quad y \in \R_{+};
\end{eqnarray*}
\item 
\(\Gamma_{i}\) is a sequence of jump times of a standard Poisson process;
\item 
\(R_{i}\) is a sequence of i.i.d. r.v.'s, uniformly distributed on \([0,1]\);
\item for any \(i=1,2,..\), \(G^{(1)}_{i}, ..., \; G^{(d)}_{i} \) - are independent stable random variables, \(G_i^{(j)} \sim S_{\alpha_j}(\beta_j, \sigma_j, 0).\)
 \item for any \(i=1,2,..\),  \(\vQ_{i}:=\left( Q^{(1)}_{i}, .., Q^{(d-1)}_{i} \right) \)- sequence of i.i.d. random vectors with the same distribution as  \(\left(
  \xi_1, ..., \xi_{d-1} 
\right)\),  
 \end{itemize}
 and all sequences \(\Gamma_i, R_i,G^{(1)}_{i}, ..., \; G^{(d)}_{i}, \vQ_{i} \)  are jointly independent.    
  \end{thm}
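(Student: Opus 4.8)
The plan is to combine Rosi\'nski's series representation of L\'evy processes \cite{Rosinsky} with the L\'evy copula description of the subordinator and the self-similarity property \eqref{SSS} of the stable components. I would proceed in two stages: first represent the multivariate subordinator $\vT$ as a random series of its jumps ordered by a Poisson process, and then replace each subordinator jump by the corresponding increment of the independent stable process, turning an increment of $S_k$ over a random time $J$ into $J^{1/\alpha_k}(S_k(1)-\mu_k)+\mu_k J$ via \eqref{SSS}.

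\emph{Stage 1 (subordinator).} Since $\vrho=0$, the process $\vT$ is pure-jump and its law is fixed by $\eta$, equivalently by the tail integral $U(x_1,\dots,x_d)=F(U_1(x_1),\dots,U_d(x_d))$ through \eqref{Sklar}. I would build the jumps from a marked Poisson process, taking the arrival times $\Gamma_i$ of a standard Poisson process as the copula-levels of the reference ($d$-th) coordinate, so that $U_d^{(-1)}(\Gamma_i)$ reproduces exactly the jumps of $\T_d$. Conditionally on a jump of $\T_d$ at copula-level $v=\Gamma_i$, the copula values of the remaining coordinates follow $\widetilde{F}(\cdot\mid v)=\partial F(\cdot,v)/\partial v$, which by the $d$-dimensional version of Lemma~5.3 in \cite{ContTankov} is a genuine distribution function on $\R_+^{d-1}$; assumption~(A2) realises it as $(h_1(\xi_1,v),\dots,h_{d-1}(\xi_{d-1},v))$, so the $k$-th coordinate acquires copula value $h_k(Q_i^{(k)},\Gamma_i)$ and actual jump size $U_k^{(-1)}(h_k(Q_i^{(k)},\Gamma_i))$. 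Attaching to the $i$-th jump an independent uniform time $R_i$ on $[0,1]$ yields the series for $\vT$; that its tail integral equals $U$ follows from the uniform-margins property of $F$ together with \eqref{Sklar} and~(A1).

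\emph{Stage 2 (subordination and identification of the law).} By the assumed independence of $S_k$ and $\T_k$ and the L\'evy property, I would condition on the path of $\vT$: over a jump of $\T_k$ of size $J$ the increment of $X_k=S_k(\T_k)$ is an independent copy of $S_k(J)$, and \eqref{SSS} gives $S_k(J)\eqd J^{1/\alpha_k}(S_k(1)-\mu_k)+\mu_k J$ with $S_k(1)-\mu_k\eqd G_i^{(k)}$. Substituting the jump sizes from Stage~1 produces precisely the summands of $Z_k$ and $Z_d$, the $G_i^{(k)}$ being drawn independently of everything else. Because $\vrho=0$, $X_k$ accumulates only through these jumps, so no continuous part is lost and the joint law of $\vec{Z}$ coincides with that of $\vX$ on $[0,1]$.

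The main obstacle is the convergence of the series. The variables $G_i^{(k)}$ are heavy-tailed (infinite mean for $\alpha_k\le1$, infinite variance for $\alpha_k<2$), so a priori the sums defining $Z_k$ need not converge and might require centering; this is exactly where the hypothesis $\int_{|\vx|\le1}|\vx|^{1/2}\eta(d\vx)<\infty$ enters. I would show it forces every subordinated marginal $X_k$ to have finite variation near the origin: the stable scaling gives $\E\bigl[\,|S_k(t)|\wedge1\,\bigr]\lesssim t^{1/\alpha_k}+t$ (with an extra logarithmic factor when $\alpha_k=1$), and since $1/\alpha_k\ge1/2$ and $t\le1$ this is dominated by $t^{1/2}$, so integrating against the $k$-th marginal of $\eta$ keeps the total jump mass of $X_k$ near zero finite. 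Concretely, writing the $i$-th jump of $Z_k$ as $a_iG_i^{(k)}+b_i$ with $a_i=(U_k^{(-1)}(\cdots))^{1/\alpha_k}$ and $b_i=\mu_k U_k^{(-1)}(\cdots)$, one has $|a_i|^{\alpha_k}=U_k^{(-1)}(\cdots)$ and $\sum_i U_k^{(-1)}(\cdots)<\infty$ a.s., so both the weighted stable series $\sum_i a_iG_i^{(k)}$ and the drift series $\sum_i b_i$ converge absolutely a.s.\ without centering. Verifying these moment bounds uniformly over the regimes $\alpha_k<1$, $\alpha_k=1$ and $1<\alpha_k\le2$, and checking that Rosi\'nski's matching conditions hold so that the limit is genuinely the subordinated L\'evy process, is the step demanding the most care.
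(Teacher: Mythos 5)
Your proposal follows essentially the same route as the paper's proof: Rosi\'nski's series representation, the conditional distribution $\widetilde{F}(\cdot\mid v)$ of the L\'evy copula realized through (A2) and the generalized inverse tail integrals, and the stable scaling identity \eqref{SSS} to turn subordinator jumps into stable increments — the paper merely carries out your two stages in one step, by verifying that the density $\breve{p}_r$ of $\vH(r,\vD)$ disintegrates the L\'evy measure of $\vX$ (via Proposition~5.8 of \cite{ContTankov}) and then explicitly checking that the drifts match, which is precisely the "matching condition" you defer. The one claim to soften is that $\sum_i|a_i|^{\alpha_k}<\infty$ gives \emph{absolute} convergence of $\sum_i a_iG_i^{(k)}$ — this fails for $\alpha_k>1$, although the series still converges a.s.\ by the three-series theorem, and your use of the hypothesis $\int_{|\vx|\le1}|\vx|^{1/2}\eta(d\vx)<\infty$ is exactly how the paper ensures the Rosi\'nski drift integral exists, so the conclusion stands.
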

\begin{proof}
The proof is given in Appendix~\ref{proof2}.
\end{proof}

\begin{remark} Some conditions, which guarantee that (A2)  holds, can be found in \cite{panov2017a}. For instance, it is sufficient to assume that there exists a density function \(p^{*}: \R_+^{d-1} \to \R\) and \((d-1)\) functions \(f_{j}^{*}: \R_{+}^{2} \to \R_{+}, \; j=1..(d-1)\), such that 
\begin{enumerate}
\item for any \(u,x>0\),
\begin{eqnarray}
\nonumber
	\int_{-\infty}^{f_{1}^{*}(u_{1}, v)}  ... \int_{-\infty}^{f_{d-1}^{*}(u_{d-1}, v)} p^{*}(z_{1}, ..., z_{d-1}) dz_{d-1} ...dz_{1}  &=&\widetilde{F}(u_1,..., u_{d-1}| v);\\
&&	\label{fp}
\end{eqnarray}
\item the functions \(f_{j}^{*}(u_{j}, v), j=1..(d-1)\) monotonically increase in \(u_{j}\) for any fixed \(v\), and moreover, for any  \(j=1..(d-1)\) and any \(y>0\), the equation
\begin{eqnarray*}
	f_{j}^{*}(u_{j},v) = y
\end{eqnarray*}
 has a  closed-form solution with respect to \(u_{j}\);  we denote this solution by \(h_{j}^{*}(y, v)\).
  \end{enumerate}
 In fact, in this case 
\begin{multline*}
	\frac{\partial^{d-1}  \widetilde{F}(u_1,..., u_{d-1}| v).}{\partial u_{1} ...\partial u_{d-1} }
	\\=
	\frac{ \partial f_{1}^{*}(u_{1}, v)} {\partial u_{1}}
	...
	\frac{ \partial f_{d-1}^{*}(u_{d-1}, v)} {\partial u_{d-1}}
	\cdot
	p^{*} \left( 
		f_{1}^{*}(u_{1}, v),
		...,
		f_{d-1}^{*}(u_{d-1}, v)
	\right),
\end{multline*}
and therefore (A2) is fulfilled with \(h_j = h_j^*,\; j=1..d,\) and r.v.'s \((\xi_1, ..., \xi_{d-1})\) having distribution with probability distribution function \(p^*.\)

Note that the  conditions (A1)-(A2) are fulfilled for the Clayton-L{\'e}vy copula, 
\begin{eqnarray*}
	F_{C}(u_{1}, ..., u_{d}) = (u_{1}^{-\theta}+...+ u_{d}^{-\theta})^{-1/\theta}
\end{eqnarray*}
with some \(\theta>0\), 
as well as for any sufficiently smooth homogeneous L{\'e}vy copulas and mixtures of them, see \cite{panov2017a}, Section~5, Examples 1-3.

\begin{remark}
Theorem~\ref{thm} deals with the situation when the components of the stable process \(\vec{S}(t)\) are assumed to be independent. In more general case, when the spectral measure is concentrated on a finite amount of points, one can  employ the fact that this process  is in fact a linear transformation of independent stable processes. More precisely, if 
\begin{eqnarray*}
\Lambda (\cdot) = \sum_{j=1}^M \lambda_j \I_{\{\cdot\}} \left( \vec{s}_j \right)
\end{eqnarray*}
with \(\lambda_1,..., \lambda_M >0\) and \(\vec{s}_1, ...\vec{s}_M \in \R^d,\)
then 
\begin{eqnarray*}
\vec{S}(1) \eqd \sum_{j=1}^M \lambda_j^{1/\alpha} \xi_j \vec{s}_j + \vec\delta, 
\end{eqnarray*}
where \(\xi_1, ..., \xi_M\) are  independent one-dimensional stable random variables \(\mS_{\alpha} (1,
1, 0)\) and    \(\vec\delta \in \R^d,\)
see \cite{NNolan} and  Proposition~2.3.7 from \cite{ST}.

\end{remark}
\section{Empirical analysis}
\label{ea}

In this chapter, we consider the following model:

\begin{eqnarray}
\label{main}
\vX (s) = \Bigl( X_{1}(s), X_{2} (s) \Bigr) := \Bigl( S_{1}(\T_{1}(s)), S_{2}(\T_{2}(s)) \Bigr),
\end{eqnarray}
where $S_{1} (t),S_{2} (t)$ are two independent stable processes and $(\T_{1} (s),\T_{2} (s))$ is a 
two-dimensional subordinator. Dependence structure between $\T_{1} (s)$  and $\T_{2} (s)$ is described via the Clayton-L{\'e}vy copula:
\begin{eqnarray}
\label{clcopula}
F (x_{1},x_{2};\delta) = \Bigl( x_{1}^{-\delta}+x_{2}^{-\delta} \Bigr)^{-1/\delta}
\end{eqnarray}
Our empirical analysis consists in two stages:
\begin{enumerate}
	\item estimation of the parameters of the model:
	\begin{enumerate}
\item	estimation of the parameters of L{\'e}vy copula between  $\T_{1} (s)\) and \(\T_{2} (s)\):
\item	estimation of the parameters of stable processes $S_{1} (t)$ and $S_{2} (t)$;
\end{enumerate}
\item	simulation of the process with considered structure \eqref{main}.
\end{enumerate}
We apply this model to the real data of Apple, Microsoft and GE 30-minute returns over
 the period October, 18, 2017, till May, 1,  2018. For each 30-minutes  period, 
the value of return
 and number of trades are known. For each day, we ignore the first 30-minutes period because of 
 the abnormally low number of trades. 
 
We analyse 2  pairs of returns: Apple and Microsoft asset prices (highly correlated - namely, correlation coefficient is equal to 0.57),  Apple and General Electrics (correlation coefficient is small, equal to 0.1).   
\subsection{L\'evy copula estimation}

In what follows, we assume that subordinators are in fact compound Poisson processes (CPP) with positive jumps:

\begin{eqnarray*}
\T_{1} (s) = \sum\limits _{i=1}^{N_{1} (t)} X_{i},\:\:\:\:\:\:\:\:\:
\T_{2} (s) = \sum\limits _{j=1}^{N_{2} (t)} Y_{j},  
\end{eqnarray*}
where $X_{1}, X_2,...$ and $Y_{1}, Y_2, ...$ are i.i.d random variables having log-normal distribution with parameters $\mu_{1},\sigma_{1}$ and $\mu_{2},  \sigma_{2}$. $N_{1} (t)$ and $ N_{2} (t)$ are the Poisson processes with intensities $\lambda_{1}$ and $\lambda_{2}$ resp. The choice of the log-normal distribution for  jump sizes is verified by visual comparison of densities (see  Figure \ref{fig:qq_pp_jumps}).

\begin{figure}
\centering
\includegraphics[width=0.5\linewidth]{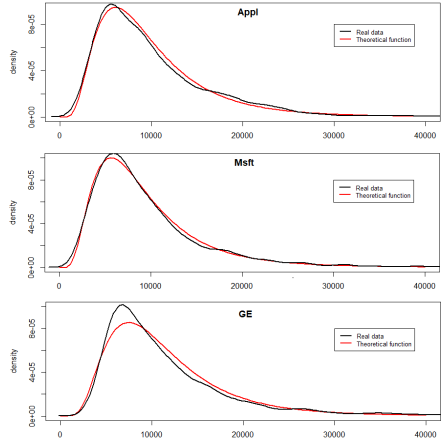}
\caption{Density functions of the number of trades (red lines) and the fitted log-normal distribution (red lines). The plots correspond to the Apple, Microsoft and GE asset prices. } 
\label{fig:qq_pp_jumps}
\end{figure}

Given that jumps occur at each moment for both components, the likelihood function for such process $(\T_{1} (s),\T_{2} (s))$ is equal to

\begin{multline}
L (\lambda_{1},\lambda_{2},\mu_{1},\mu_{2}, \sigma_{1}, \sigma_{2},\delta)=  
\biggl(\frac{ (1+\delta)(\lambda_{1}\lambda_{2})^{\delta+1}  }{4^{\delta} \sigma_{1} \sigma_{2}2\pi} \biggr)^{n}
\prod\limits_{i=1}^{n}x_{i}^{-1}y_{i}^{-1}\\
\cdot\exp\left\{
- \lambda^{\shortparallel}T-
0.5\sum\limits _{i=1}^{n}\Bigl(\frac{\ln x_{i}-\mu_{1}}{\sigma_{1}}\Bigr)^{2}-
0.5\sum\limits _{i=1}^{n}\Bigl(\frac{\ln y_{i}-\mu_{2}}{\sigma_{2}}\Bigr)^{2}
\right\}\\
\cdot\prod\limits_{i=1}^{n}\Big(\big[0.5\lambda_{1}(1-Erf_{1}(x_{i}))\big]^{\delta}+
\big[0.5\lambda_{2}(1-Erf_{2}(y_{i}))\big]^{\delta}\Big)^{-1/\delta-2}\\
\cdot \prod\limits_{i=1}^{n}\Big[(1-Erf_{1}(x_{i}))(1-Erf_{2}(y_{i}))\Big]^{\delta}
\end{multline}
where $x_{i}$ and $y_{i}, i = 1 \dots n$ are jumps of the first and the second components occurring up to some fixed time $T=n/4$ and $\lambda^{\shortparallel}=F(\lambda_{1},\lambda_{2},\delta)$, see \cite{Esm}. The results of numerical maximization of the function
$L$  are presented in Table~\ref{tab:MLE1}. Higher dependence between Apple and Microsoft stock prices is reflected by higher values of the parameter \(\delta\) (1.92, while for the second pair this parameter is equal to 0.8).

The results of the application of the Kolmogorov-Smirnov tests are presented in Table~\ref{tab:KS}.
The hypothesis about  the distribution is not rejected (at the 0.05 level of significance) for the assets in both pairs. 
 We arrive at the conclusion that the proposed model fits well the cumulative number of trades.
\begin {table}[htb]
\caption {MLE for the parameters of copula and marginal distributions} 
\label{tab:MLE1} 
\begin{center}
\noindent\begin{tabular}{|c|c|c|c|c|c|c|c|}
\hline 
\multicolumn{8}{|c|}{Estimated parameters} \\ 
\hline 
Pair         & $\mu_{1}$ &$\mu_{2} $ &$\delta $  &$  \sigma_{1}$ &$  \sigma_{2}$ &$\lambda_{1}$&$\lambda_{2}$\\ 
\hline 
Apple \&Msft & 8.82     & 8.01       & 1.92     &0.73         &  0.91      &     5.22       &  7.8      \\ 
\hline 
Appl  \& GE	 & 8.74     & 9.02      & 0.80       & 0.60        & 0.58      &      5.13      &  4.9    \\ 
\hline

\end{tabular} 
\end{center}
\end {table}

\begin {table}[htb]
\caption {Statistics of the Kolmogorov-Smirnov test: \(D_1, p_1\) - corresponding to the first asset in the pair, \(D_2, p_2\) - to the second.} 
\label{tab:KS} 
\begin{center}
\noindent\begin{tabular}{|c|c|c|c|c|}
\hline 
\multicolumn{5}{|c|}{Estimated parameters} \\ 
\hline 
Pair         & $D_{1}$ &$p-value_{1}$  & $D_{2}$  &$p-value_{2}$ \\ 
\hline 
Apple \&Msft & 0.0627     &  0.13       &  0.0466     & 0.44            \\ 
\hline 
Apple  \& GE	 & 0.0613     & 0.15     & 0.065       & 0.11          \\ 
\hline

\end{tabular} 
\end{center}
\end {table}

\subsection{Estimation of the parameters of stable process }
\label{ee4}
Estimation scheme for the parameters of the processes $S_{1} (t)$ and $S_{2} (t)$ is described in Section~\ref{1d}. We examine the model \eqref{AG} separately for each stock asset. Below we provide some technical details about the estimation procedure.

Estimation of the parameters is done by  the QMLE approach. 
According to the method described in \cite{nolan1997},  density of the stable random variable for the case $\alpha\neq1$, $\sigma=1$ and $\mu=0$ can be represented in the following form:
\begin{eqnarray*}
f(x;\alpha,\beta)&=&\frac{\alpha(x-\zeta)^{\frac{1}{\alpha-1}}}{\pi|\alpha-1|}
\int_{-\theta_0}^{\pi/2} V(\theta)  \exp\Bigl\{-(x-\zeta)^{\frac{\alpha}{\alpha-1}} V(\theta)\Bigr\} d\theta,  \qquad
 \text{if } x > \zeta, \\
f(x;\alpha,\beta)&=&f(-x;\alpha,-\beta),  \qquad \text{if } x < \zeta, \\
f( \zeta;\alpha,\beta)&=&\frac{\Gamma(1+\alpha^{-1})\cos(\theta_{0})} {\pi (1+\zeta^{2})^{1/(2\alpha)}}, \qquad \text{if } x = \zeta,
 \end{eqnarray*}
where 
\begin{eqnarray*}
\zeta &=&\zeta(\alpha,\beta)=-\beta \tan\left(\frac{\pi\alpha}{2}\right),\\
\theta_{0}&=& \theta_{0}(\alpha,\beta)=\frac{1}{\alpha} \arctan\left(\beta \tan\left(\frac{\pi\alpha}{2}\right) \right),\\
V(\theta) &=& V(\theta,\alpha,\beta)=(\cos\alpha\theta_{0})^{\frac{1}{a-1}}\left(\frac{\cos\theta}{\sin\alpha(\theta_{0}+\theta)}\right)^{\frac{1}{a-1}}\frac{\cos(\alpha\theta_{0}+(\alpha-1)\theta)}{\cos\theta}.
\end{eqnarray*}
Probability density function for the case when  $\sigma\neq1$ and $\mu\neq0$  can be calculated by the standardisation of a random variable. 
Estimation procedure of the parameters for stable distributions was conducted with the help of the R package "Stabledist" . Results of the numerical optimisation using this method are presented in Table~\ref{fig2}.

\begin{table}[htb]
\caption {\label{fig2} Estimated values of the parameters of stable distributions} 
\label{tab:MLE} 

\centering
\begin{tabularx}{\textwidth}{|X|X|X|X|X|}
\hline 
Stock	& $\alpha$  & $\beta$  & $\sigma$ & $\mu$ \\ 
\hline 
Apple	& 1.62 & 0.09 & 1.83e-05  & 3.02e-09  \\ 
\hline 
Microsoft	& 1.64 &0.15  & 2.10e-05   & 1.216e-08   \\ 
\hline 
GE	& 1.83 & 0.21 & 2.52e-05  &  -2.45e-08\\
\hline 
\end{tabularx} 
\end{table}

\subsection{Simulation}

Simulation algorithm is described below.

\begin{enumerate}
	\item  Simulate \(N\) i.i.d. standard exponential random variables $T_{j}$ , \(i = 1,\dots, N.\)
	\item  Simulate \(N\) independent 2-dimensional stable random variables  \((G_i^{(1)}, G_i^{(2)})\) with  \(G_i^{(1)} \sim S_{\alpha_1}(\beta_1, \sigma_1, 0),\) \(G_i^{(2)} \sim S_{\alpha_2}(\beta_2, \sigma_2, 0)\).
	\item Simulate N independent uniform random variables $R_{i}$ on [0, 1], \(i = 1,\dots, N.\)
    \item Simulate N independent random variables $Q_{i}$ with distribution function
     $H (z) =(z^{-\delta} + 1)^{−(1+\delta)/\delta}$ by the method of inverse function.
     \item Simulate a multivariate subordinated stable processes by (truncated) series representation:
     \begin{eqnarray*}
     Z_{1} (s)  &:=&  \sum_{i=1}^{N}  \left[  \left( G^{(1)}_{i} - \mu_1 \right) \left(
     U_{1}^{(-1)}\left(
     h_{1} (Q^{(1)}_{i}, \Gamma_{i} )
     \right)
     \right)^{1/ \alpha_1}  
     \right.\\&&
     \left.
     \hspace{3cm}
     +
     \mu_1 U_{1}^{(-1)}\left(
     h_{1} (Q^{(1)}_{i}, \Gamma_{i} )
     \right) 
     \right]
     \I\left\{ R_{i} \leq s \right\}, \\
     Z_{2} (s)  &:=&  \sum_{i=1}^{N}  \left[  \left( G^{(2)}_{i} - \mu_2 \right) \left(
     U_{2}^{(-1)}\left(
     \Gamma_i
     \right)
     \right)^{1/ \alpha_1}  
     +
     \mu_2 U_{2}^{(-1)}\left(
     \Gamma_i
     \right) 
     \right]
     \I\left\{ R_{i} \leq s \right\}.
     \end{eqnarray*}
  \subsection{Discussion} 
    
\;\;Table~\ref{tab:moments} contains the empirical confidence intervals for mathematical expectations, variances and the correlation coefficients based on 100 simulated trajectories. It turns out that the correlations are very well represented by the proposed model. In fact, confidence intervals are rather small and contain the true correlation parameter. For instance, for the pair Apple-Microsoft the correlation coefficient between log-returns is equal to 0.57, and the constructed confidence interval is [0.52, 0.57]. Moreover, the correlation between asset returns can be easily seen from Figure \ref{fig:msft-apple}, which represents 1 trajectory of Apple and Microsoft returns: after time moment 500 the prices are strongly correlated. Analogously, one can analyse the  Apple-GE pair. 
 As can be seen from Figure~\ref{fig:appl-ge},  dependence between simulated returns for  this pair  is much weaker than for Apple-GE. It is also reflected by the larger value of the parameter $\delta$ for the first pair. 

\;\;Therefore, we arrive at the conclusion that the proposed model can represent the dependence in terms of the correlation coefficients between asset returns both for the cases of relatively high correlation (as Apple-Microsoft) and small correlation (as Apple-GE).
    
    \begin{table}[htb]
\caption {Moments of simulated and real data} 
\label{tab:moments} 
\centering
\begin{tabularx}{\textwidth}{|X|X|X|X|X|X|}
\hline 
\multicolumn{6}{|c|}{Appl  Msft} \\ 
\hline 
& $E[ Z_{1}]\cdot10^{5}$&$E[ Z_{2}]\cdot10^{5}$ &$st.d.[ Z_{1}]\cdot10^{3}$  & $st.d.[ Z_{2}]\cdot10^{3}$&$cor[ Z_{1} Z_{2}]$ \\
\hline 
Simulated (95\% int.) & [1.1;4.5] & [9.0;12.1] & [2.21;4.31]  &  [3.13;5.47] & [0.52;0.59] \\ 
\hline 
Real            & 3.09         & 11.7        & 3.93         &   4.64        & 0.57 \\ 
\hline 
\multicolumn{6}{|c|}{Appl  GE}\\
\hline 
& $E[ Z_{1}]\cdot10^{5}$&$E[ Z_{2}]\cdot10^{5}$ &$st.d.[Z_{1}^{2}]\cdot10^{3} $  & $st.d.[Z_{2}]\cdot10^{3}$&$cor[Z_{1} Z_{2}]$ \\
\hline 
Simulated (95\% int.)	& [1.3;4.2]    & [-47.1;-26.5] & [1.93;3.95]  & [3.77;6.06]& [0.02;0.12] \\ 
\hline 
Real	            &      3.09    &     -28.9      & 3.93         & 5.68       & 0.10 \\ \hline 
\end{tabularx} 
\end{table}

    \begin{figure}
    	\begin{center}
    	\begin{subfigure}{1\linewidth}
   \includegraphics[width=0.7\linewidth]{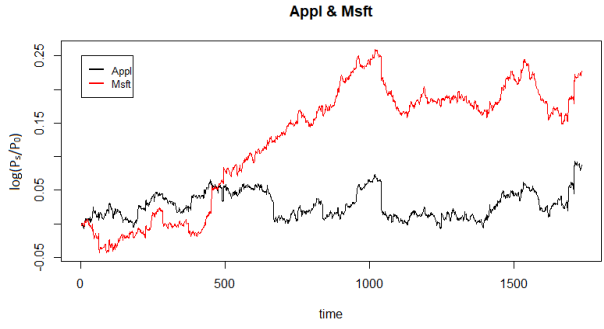}
    	\caption{}
    	\label{fig:msft-apple}
        \end{subfigure}
        \begin{subfigure}{1\linewidth}    	\includegraphics[width=0.7\linewidth]{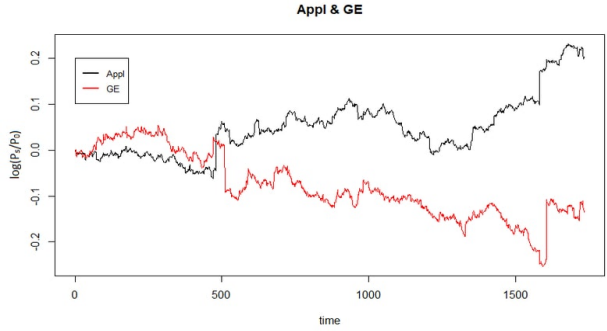}
    	\caption{}
    	\label{fig:appl-ge}
         \end{subfigure}
         \caption{Trajectories of the simulated processes for Apple and Microsoft asset prices (a)  and Apple and GE asset prices (b)}
         \end{center}
     \end{figure}

\end{enumerate}


\end{remark}

\appendix
\section{Theoretical properties of the subordinated stable process}\label{app1}
The next proposition reveals the relation between the L{\'e}vy triplet of the process \(\vX(s)\) and the L{\'e}vy triplets of the processes \(S_1,...S_d\) and \(\vT.\) 

 \begin{proposition}
 \label{thm2}
 \begin{enumerate}
\item 
 Let  \(\vX\) be a process defined by \eqref{mainn}, and assume that the one-dimensional processes \(S_1,..., S_d\) are independent.   Then  \(\vX\) is a \(d\)-dimensional L{\'e}vy process with the  characteristic function  equal to
\begin{eqnarray}
\label{cf}
\phi_{\vX(s)}(\vu) = 
\exp \left\{ 
	t \psi_{\T(s)}\left(
	   \varphi_{S_1} (u_1),
		...,
	  \varphi_{S_d} (u_d)
	\right)
\right\},
\end{eqnarray}
where \(\vu = \left(
  u_{1},...,u_{d}
\right).\) The L{\'e}vy triplet of the process \(\vX\) is equal to 
\(\left(
\vb, 0, \nu
\right)
\), where 
\begin{eqnarray*}
\vb &=& \int_{\R_{+}^{d}}\eta(d \vec{y}) \int_{|\vx| \leq 1} \vx \; m(d\vx, \vec{y})
+ 
\left(
  \rho_1 \mu_1,  ...,   \rho_d \mu_d
\right)^\top, \\
\nu(B) &=& \int_{\R_{+}^{d}}m \Bigl(B; \; \vy \Bigr) \; \eta(d \vy) \\
&&\hspace{1cm}
+
\int_B \left(
   \rho_1 \I_{A_1}(x_1) \nu_1 (dx_1)+..+
   \rho_d \I_{A_d}(x_d) \nu_d (dx_d)
\right),\\
&&\hspace{7cm}\qquad B \subset \R^{d}.
\end{eqnarray*}
where \(m(\cdot, \vec{s}) \) with \(\vec{s} = \left(s_{1},...,s_{d}\right)\) stands for the distribution of the random vector \(\left(S_{1}(s_{1}),..., S_{d}(s_{d})\right), \)  
and \(A_j\) is the \(j-\)th coordinate axis on \(\R^d,\) \(j=1..d.\)
\item If we additionally assume that the L{\'e}vy measure \(\eta\) of the  process \(\T\) satisfies \[
\int_{|\vx| \leq 1} |\vx|^{1/2} \eta(d\vx) < \infty.
\] and \(\vrho = 0\), then the process \(\vX(s)\) has bounded variation, and its characteristic function can be represented 
as 
\begin{eqnarray*}
\phi_{\vX(s)}(\vu)  = \exp\left\{
s \int_{\R} \left( e^{\langle \vu, \vx \rangle} -1 \right)\nu(d\vx)
\right\} 
\end{eqnarray*}
with 
\begin{eqnarray*}
\nu(B)= \int_{\R_{+}^{d}}m \Bigl(B; \; \vy \Bigr) \; \eta(d \vy),  \qquad B \subset \R^{d}.
\end{eqnarray*}
\end{enumerate}
 \end{proposition}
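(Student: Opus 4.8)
The plan is to handle the two parts separately: Part~1 by conditioning on the subordinator and then rewriting the resulting exponent in L\'evy--Khintchine form, and Part~2 by verifying the integrability $\int_{|\vx|\le1}|\vx|\,\nu(d\vx)<\infty$ through a scaling estimate for truncated moments of stable variables. Throughout I write $\vS(\vy):=(S_1(y_1),\dots,S_d(y_d))$ for $\vy\in\R_+^d$, so that $m(\cdot,\vy)$ is the law of $\vS(\vy)$. For Part~1 I would first record that $\vX$ is a $d$-dimensional L\'evy process: since $\vT$ has stationary independent increments and is independent of the mutually independent $S_1,\dots,S_d$, the increments $S_j(\T_j(s_2))-S_j(\T_j(s_1))$ inherit stationarity and joint independence, which is the content of the multivariate subordination theorem of \cite{BNPedSato}. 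Granting this, the characteristic function follows by conditioning on $\vT(s)$: independence of the $S_j$ together with $\E[e^{\i u_j S_j(t)}]=e^{t\varphi_{S_j}(u_j)}$ factorizes the conditional expectation into $\exp\{\langle\vT(s),\vec\varphi\rangle\}$ with $\vec\varphi=(\varphi_{S_1}(u_1),\dots,\varphi_{S_d}(u_d))$, and averaging over $\vT(s)$ via $\E[e^{\langle\vT(s),\vec w\rangle}]=e^{s\psi_\T(\vec w)}$ yields \eqref{cf}. Evaluating $\psi_\T$ at the complex argument $\vec\varphi$ needs a short justification: each $\Re\varphi_{S_j}(u_j)\le0$, so $e^{\langle\vec\varphi,\vy\rangle}-1$ stays bounded, and since $\vT$ is a subordinator one has $\int_{|\vx|\le1}|\vx|\,\eta(d\vx)<\infty$; hence the exponent in \eqref{laplace} is finite and the identity extends from real arguments with negative entries by analyticity and continuity.

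To read off the triplet I would rewrite $e^{\langle\vec\varphi,\vy\rangle}=\prod_j e^{y_j\varphi_{S_j}(u_j)}=\E[e^{\i\langle\vu,\vS(\vy)\rangle}]=\int_{\R^d}e^{\i\langle\vu,\vx\rangle}\,m(d\vx,\vy)$, so that $e^{\langle\vec\varphi,\vy\rangle}-1=\int(e^{\i\langle\vu,\vx\rangle}-1)\,m(d\vx,\vy)$. Splitting off the compensator $\i\langle\vu,\vx\rangle\,\I\{|\vx|\le1\}$ and applying Fubini, the $\eta$-integral produces the jump part $\int_{\R_+^d}m(\cdot,\vy)\,\eta(d\vy)$ of $\nu$ and the drift $\int_{\R_+^d}\eta(d\vy)\int_{|\vx|\le1}\vx\,m(d\vx,\vy)$. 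The term $\langle\vrho,\vec\varphi\rangle$ is handled by inserting the L\'evy--Khintchine form $\varphi_{S_j}(u_j)=\i u_j\mu_j+\int_\R(e^{\i u_j x}-1-\i u_j x\,\I\{|x|\le1\})\,\nu_j(dx)$ of each stable exponent and reading it as an integral along the $j$-th coordinate axis $A_j$; this contributes $\sum_j\rho_j\I_{A_j}\nu_j$ to $\nu$ and $(\rho_1\mu_1,\dots,\rho_d\mu_d)^\top$ to $\vb$. Collecting terms gives the stated triplet, with no Gaussian part because stable processes with $\alpha_j<2$ and the subordinator $\vT$ carry none; the Fubini steps are legitimate once one knows $\nu$ is a L\'evy measure, which is again part of the subordination theorem.

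For Part~2, setting $\vrho=0$ removes the second summands of $\vb$ and $\nu$, leaving $\nu(B)=\int_{\R_+^d}m(B;\vy)\,\eta(d\vy)$. The real content is bounded variation, for which --- the Gaussian part already vanishing --- it suffices to prove $\int_{|\vx|\le1}|\vx|\,\nu(d\vx)<\infty$. By Fubini this equals $\int_{\R_+^d}\E[\,|\vS(\vy)|\,;\,|\vS(\vy)|\le1\,]\,\eta(d\vy)$. On $\{|\vy|>1\}$ the integrand is at most $1$ and $\eta$ has finite mass; on $\{|\vy|\le1\}$ I would use the scaling \eqref{SSS}, $S_j(y_j)\eqd y_j^{1/\alpha_j}(S_j(1)-\mu_j)+y_j\mu_j$, together with the elementary inequality $\min(a,1)\le a^p$ for $a\ge0$, $p\in[0,1]$, choosing $p=\alpha_j/2$. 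Since a strictly stable variable satisfies $\E|S_j(1)-\mu_j|^p<\infty$ exactly for $p<\alpha_j$, this yields $\E[\min(|S_j(y_j)|,1)]\lesssim y_j^{1/2}$, and summing over $j$ gives $\E[\,|\vS(\vy)|;|\vS(\vy)|\le1\,]\lesssim|\vy|^{1/2}$; the hypothesis $\int_{|\vx|\le1}|\vx|^{1/2}\eta(d\vx)<\infty$ then closes the bound. Bounded variation lets one drop the truncation: with $\vrho=0$ the drift $\vb=\int_{|\vx|\le1}\vx\,\nu(d\vx)$ is precisely the small-jump compensator, so the linear term cancels and the exponent collapses to $\int_{\R^d}(e^{\i\langle\vu,\vx\rangle}-1)\,\nu(d\vx)$, as asserted.

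The hard part will be the truncated-moment estimate: the choice $p=\alpha_j/2$ is exactly what matches the $|\vx|^{1/2}$-integrability imposed on $\eta$ and what works uniformly over all $\alpha_j\in(0,2)$, the worst case $\alpha_j\uparrow2$ forcing the exponent $1/2$. The remaining technicalities --- the analytic continuation of $\psi_\T$ to $\vec\varphi$ and the Fubini interchanges --- are routine once one uses that $\vT$, being a subordinator, has bounded variation. The only case needing separate bookkeeping is $\alpha_j=1$, where \eqref{SSS} carries a logarithmic correction; the moment bound $\E|S_j(1)-\mu_j|^p<\infty$ for $p<1$ still holds, so the argument goes through with cosmetic changes.
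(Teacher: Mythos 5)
Your proof is correct, but it takes a genuinely different route from the paper's: the paper disposes of this proposition in a single line, citing Theorem~3.3 of Barndorff-Nielsen, Pedersen and Sato \cite{BNPedSato}, which is precisely the multivariate-subordination theorem producing both the composed exponent \(\psi_{\T}\left(\varphi_{S_1}(u_1),\dots,\varphi_{S_d}(u_d)\right)\) and the L{\'e}vy triplet of \(\vX\), including the bounded-variation case. What you do instead is re-derive the special case of that theorem needed here: conditioning on \(\vT(s)\) and analytically continuing the Laplace exponent to arguments with non-positive real parts gives \eqref{cf}; writing \(e^{\langle\vec{\varphi},\vy\rangle}-1=\int_{\R^d}\left(e^{\i\langle\vu,\vx\rangle}-1\right)m(d\vx,\vy)\) and splitting off the compensator identifies \(\left(\vb,0,\nu\right)\), with the \(\langle\vrho,\vec{\varphi}\rangle\) term supplying the coordinate-axis parts of \(\nu\) and the \(\rho_j\mu_j\) drift contributions; and the hypothesis \(\int_{|\vx|\le1}|\vx|^{1/2}\eta(d\vx)<\infty\) is converted into \(\int_{|\vx|\le1}|\vx|\,\nu(d\vx)<\infty\) by the scaling bound \(\E\left[\min\left(|S_j(y_j)|,1\right)\right]\lesssim y_j^{1/2}\). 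The citation is economical, but your self-contained argument has the merit of making visible exactly where the \(|\vx|^{1/2}\)-integrability condition enters, something the paper leaves entirely inside the reference.

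One step needs repair. In Part~2 you apply \(\min(a,1)\le a^{\alpha_j/2}\) to \(|S_j(y_j)|\) as a whole; via \eqref{SSS} this also produces the shift term \(\left(y_j|\mu_j|\right)^{\alpha_j/2}\), and when \(\alpha_j<1\) the exponent \(\alpha_j/2<1/2\) makes this term larger than \(y_j^{1/2}\) near zero, so the stated hypothesis on \(\eta\) no longer closes the bound. Split first,
\[
\min\bigl(|S_j(y_j)|,1\bigr)\le \min\bigl(y_j^{1/\alpha_j}|S_j(1)-\mu_j|,1\bigr)+\min\bigl(y_j|\mu_j|,1\bigr),
\]
apply the power bound with \(p=\alpha_j/2\) only to the centered part, which gives \(y_j^{1/2}\,\E|S_j(1)-\mu_j|^{\alpha_j/2}\), and bound the second term by \(|\mu_j|\,y_j\le|\mu_j|\,y_j^{1/2}\) for \(y_j\le1\). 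The same split absorbs the logarithmic correction in the case \(\alpha_j=1\), since \(y\log(1/y)\lesssim y^{1/2}\) for small \(y\). With this adjustment your argument is complete.
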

 \begin{proof}
 This  proposition follows from Theorem~3.3  in \cite{BNPedSato}.
 \end{proof}

\section{Proof of Theorem~\ref{thm}}
\label{proof2}
\textbf{1.} 
In the core of this proof lies the result by Rosi{\'n}sky \cite{Rosinsky}, 
 which we formulate in the simplified form below. 
 \begin{lemma} Assume that there exists a (multidimensional) random variable \(\vD\) in a measurable space \(S\) and  a
function \(\vH: \R_+\times S \to \R^{d}\) such that 
\begin{eqnarray}
\label{nu}
\nu(B) &:=& \int_{\R_{+}} \int_B
\breve{p}_{r}(\vx) d\vx\;
  dr, \qquad \qquad   r>0, \; B \in \B(\R^{d}),
\end{eqnarray}
is a L{\'e}vy measure, 
where \(\breve{p}_r(\vx)\) is a density function of the r.v. \(\vH(r,\vD).\)
Then the series
\(
X(s) = 
\sum_{i=1}^{\infty}
\vH\left(\Gamma_{i}, \vD_{i} \right) \cdot  \I \left\{
	R_{i} \leq s
\right\}
\), where
\begin{itemize}
\item \(\Gamma_{i}\)  is a sequence of jump times of a standard Poisson process,
\item \(\vD_i\) - sequence of i.i.d. r.v.'s with the same distribution as \(\vD,\)
\item \(R_{i}\) - sequence of i.i.d. r.v.'s uniformly distributed on \([0,1]\), 
\end{itemize}
converges almost surely and uniformly on \(t \in [0,1]\) to a L{\'e}vy process with triplet \((\vb, 0, \nu),\) where 
\begin{eqnarray}
\label{mu}
\vb = \int_{\R_+} \int_{|\vx| \leq 1} \vx \breve{p}_{r}(\vx) d\vx \; dr,
\end{eqnarray}
provided that the last integral exists.
\end{lemma}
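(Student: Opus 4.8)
The plan is to recognise this statement as the bounded-variation special case of Rosiński's point-process representation and to prove it by direct Poisson bookkeeping rather than through the general Itô--Nisio machinery. First I would observe that \(\{\Gamma_i\}_{i\ge1}\), being the jump times of a unit-rate Poisson process, is a Poisson point process on \(\R_+\) whose mean measure is Lebesgue measure, and that attaching to each point the independent marks \(R_i\sim\mathrm{Unif}[0,1]\) and \(\vD_i\eqd\vD\) yields, by the marking theorem, a Poisson point process \(N=\sum_i\delta_{(\Gamma_i,R_i,\vD_i)}\) on \(\R_+\times[0,1]\times S\) with mean measure \(dr\otimes\mathrm{Leb}_{[0,1]}\otimes\P_{\vD}\).

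Next I would push this forward under the map \((r,u,\mathbf d)\mapsto(u,\vH(r,\mathbf d))\). By the mapping theorem the images again form a Poisson point process, now on \([0,1]\times\R^{d}\); and since \(\breve p_r\) is by hypothesis the density of \(\vH(r,\vD)\), the mean measure of the second coordinate is exactly the \(\nu\) of \eqref{nu}. Thinning to the points with \(R_i\le s\) gives, for each fixed \(s\), a Poisson point process on \(\R^{d}\) with intensity \(s\,\nu\). Because the \(R_i\) are uniform and independent of everything else, the points falling in disjoint \(s\)-intervals are independent and their count laws depend only on interval length, which is what produces independent and stationary increments for the process \(s\mapsto X(s)\).

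The convergence is where the bounded-variation hypothesis does the work. The large-jump index set \(\{i:|\vH(\Gamma_i,\vD_i)|>1\}\) is a.s. finite since \(\nu(\{|\vx|>1\})<\infty\) for any L\'evy measure, so it contributes only finitely many terms. For the small jumps, the assumed finiteness of \(\vb\) in \eqref{mu} is precisely the statement that \(\int_{|\vx|\le1}|\vx|\,\nu(d\vx)<\infty\), whence Campbell's formula gives \(\E\sum_i|\vH(\Gamma_i,\vD_i)|\,\I\{|\vH(\Gamma_i,\vD_i)|\le1\}<\infty\) and the series converges absolutely a.s. Uniform convergence on \(s\in[0,1]\) then needs no centering: for any tail one has \(\sup_{s\in[0,1]}\bigl|\sum_{i=n}^{m}\vH(\Gamma_i,\vD_i)\I\{R_i\le s\}\bigr|\le\sum_{i=n}^{m}|\vH(\Gamma_i,\vD_i)|\), and the right-hand side is the tail of an a.s. convergent series, so the partial sums are uniformly Cauchy.

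Finally I would identify the limit. Independent stationary increments together with an a.s.\ convergent jump series make \(X\) a bounded-variation L\'evy process whose jumps have intensity \(\nu\), and Campbell's theorem gives its characteristic function as \(\exp\bigl\{s\int_{\R^{d}}(e^{\i\langle\vu,\vx\rangle}-1)\,\nu(d\vx)\bigr\}\). To reconcile this uncentered exponent with the claimed triplet \((\vb,0,\nu)\) under the truncation \(\vx\mapsto\vx\,\I\{|\vx|\le1\}\), I would note that \(\vb=\int_{|\vx|\le1}\vx\,\nu(d\vx)\) exactly cancels the truncation correction \(\int_{|\vx|\le1}\i\langle\vu,\vx\rangle\,\nu(d\vx)\), so the two descriptions of the law coincide. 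The main obstacle is the honest verification that the thinned point processes deliver independent stationary increments jointly with the uniform a.s.\ convergence; once the bounded-variation condition is in force the series is absolutely convergent, so no Itô--Nisio argument or recentering sequence is required, which is exactly the simplification meant by ``the simplified form''.
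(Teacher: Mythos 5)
Your argument is correct in the regime the paper actually uses, but it takes a genuinely different route from the paper: the paper does not prove this lemma at all, it simply quotes it as ``the result by Rosi\'nsky, formulated in simplified form'' and leans on the general series-representation theorem of Rosi\'nski (2001), whose proof goes through centering sequences and an It\^o--Nisio argument. You instead give a self-contained derivation by pure Poisson bookkeeping: marking $\{\Gamma_i\}$ with $(R_i,\vD_i)$, mapping by $(r,u,\mathbf d)\mapsto(u,\vH(r,\mathbf d))$ to get a Poisson point process on $[0,1]\times\R^d$ with mean measure $\mathrm{Leb}_{[0,1]}\otimes\nu$, reading off independent stationary increments from restriction to disjoint strips, and getting absolute a.s.\ convergence (hence uniform convergence in $s$ with no centering) from $\nu(|\vx|>1)<\infty$ plus Campbell's formula. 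This is cleaner and entirely adequate for Theorem~\ref{thm}, where the hypotheses $\int_{|\vx|\le1}|\vx|^{1/2}\eta(d\vx)<\infty$ and $\vrho=0$ force $\int_{|\vx|\le1}|\vx|\,\nu(d\vx)<\infty$, i.e.\ bounded variation. The one point to flag is your claim that existence of $\vb$ in \eqref{mu} ``is precisely'' $\int_{|\vx|\le1}|\vx|\,\nu(d\vx)<\infty$: that identification holds only if ``the integral exists'' is read as absolute convergence. In Rosi\'nski's theorem the sharp condition for the \emph{uncentered} series to converge is that $A(n)=\int_0^n\E\bigl[\vH(r,\vD)\,\I\{|\vH(r,\vD)|\le1\}\bigr]\,dr$ has a limit, which can happen with cancellation while $\int_{|\vx|\le1}|\vx|\,\nu(d\vx)=\infty$; in that conditionally convergent case your absolute-convergence and uniform-Cauchy steps fail and the It\^o--Nisio machinery is genuinely needed. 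So your proof establishes the lemma under a slightly stronger reading of its hypothesis than the general statement admits --- but that stronger reading is exactly the one in force in the paper's application, so nothing downstream is affected.
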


 
\textbf{2.} In what follows, we denote by \(\nu\) the L{\'e}vy measure of the process 
\begin{eqnarray*}
 \vX^\circ (s) :=\Bigl( S^\circ_{1}(\T_{1}(s)), ..., S^\circ_{d}(\T_{d}(s)) \Bigr).
\end{eqnarray*}
As it is shown in Proposition~\ref{thm2}, 
\begin{eqnarray*}
\nu(B) &=& \int_{\R_{+}^{d}}\mu \Bigl(B; \; \vy \Bigr) \; \eta(d \vy), \qquad 
B \in \B(\R^{d}).
\end{eqnarray*}
Applying  Proposition~5.8 from \cite{ContTankov}, we conclude that 
\begin{eqnarray}
\nonumber 
 \nu (B) &=& \int_{\R_{+}^{d}}   \mu\Bigl(B \; ; \vy  \Bigr) \; 
 \left.\frac{\partial^{d} F}{\partial u_{1} \; ...\; \partial u_{d}}\right|_{\substack {u_{1}= U_{1}(y_{1}) \\ ... \\ u_{d}=U_{d}(y_{d})}} \;
 d\left( U_{1}(y_{1})\right)
 ...
 d\left( U_{d}(y_{d})\right) \\
 \label{mumu}
 &=& 
 \int_{\R_{+}^{d}}   \mu\Bigl( B \; ;  \vec{U^{-1}} (\vu)\Bigr) \; 
\frac{\partial^{d} F (\vu)}{\partial u_{1} \; ...\; \partial u_{d}} du_{1} ...du_{d},
 \end{eqnarray}
 where \(\vec{U^{-1}}(\vu) = \left( U_{1}^{-1} (u_{1}) ,..., U_{d}^{-1} (u_{d})\right)\).
In what follows, we consider the sets \(B = B_{1} \times ... \times B_{d}\), where \(B_{k} = [x_{k}, \infty), \; x_{k} \in \R, \; k=1..d\). For such \(B\), 
 \begin{eqnarray*}
\mu\Bigl( B; \;\vec{U^{-1}}(\vu)\Bigr) = \mu_1\Bigl( B_{1}; \; U_{1}^{-1} (u_{1})\Bigr) \;\cdot  ... \cdot \; \mu_d\Bigl( B_{d} \; ; U_{d}^{-1} (u_{d})\Bigr),
 \end{eqnarray*}
 where by \(\mu_j( \cdot ; t), j=1..d,\)  we denote the distribution of \[S_j(t) \eqd t^{1/ \alpha_j} S_j(1) + \left(t- t^{1/\alpha_j}\right)\mu_j. \]  
Therefore,  for \(B = B_{1} \times ... \times B_{d}\) defined above,
  \begin{eqnarray*}
  	   \label{nu1}
	 \nu(B) 
	 & =&
	   	\int_{\R_{+}} 
		\E_{\widetilde{F}(\cdot| U_d^{-1}(u_d))}
		 \Biggl[G(B_{1},..., B_{d-1})
		 \Biggr]
\mu_d\Bigl(  B_{d} \; ; U_{d}^{-1} (v) \Bigr) 
	 du_d,
 \end{eqnarray*} 
 where 
 \begin{eqnarray*} 
G(B_{1},..., B_{d-1}) &:=&  \mu_1\Bigl( B_{1} \; ; U_{1}^{-1} (\cdot) \Bigr) 
			\cdot
	...		\cdot
\mu_{d-1}\Bigl( B_{d-1} \; ; U_{d-1}^{-1} (\cdot) \Bigr),
 \end{eqnarray*} 
and  by \(\E_{\widetilde{F}(\cdot|v)}\) we denote the mathematical expectation with respect to the distribution with cdf  \(\widetilde{F}(\cdot|v)\). Due to  the Fubini theorem, 
\begin{eqnarray*}
 	\E_{{\widetilde{F}(\cdot|v)}}
		 \Biggl[
			G(B_{1},..., B_{d-1})
		 \Biggr]
	&=& 
		\int_{B_{1}} ... \int_{B_{d-1}} g(x_{1}, .., x_{d-1} \;| v) \; dx_{d-1} ... dx_{1},
\end{eqnarray*}
where
\begin{multline}
 \label{g}
g(x_{1}, .., x_{d-1} \; | v)  = \; 	\int_{\R_{+}^{d-1}}
			p_1 \Bigl( x_{1};
				U_{1}^{-1} (u_{1})
			\Bigr) \cdot
			... \cdot
			p_{d-1}\Bigl( x_{d-1};
				U_{d-1}^{-1} (u_{d-1})
			\Bigr) \\
			\cdot 
	 	 	\frac{
\partial^{d-1} \widetilde{F}(u_1,..., u_{d-1} | v) 
}{\partial u_{1} \; ...\; \partial u_{d-1}}
du_{1} ... du_{d-1}, \qquad v \geq 0,
\end{multline}
and \(p_j\left( \cdot \; ; t\right) \) is the  density of the measure  \(\mu_j( \cdot ; t), j=1..(d-1)\), which exists due to Proposition~3.12 from \cite{ContTankov}. Note that  \(g\) is a density function, see Remark~5.4 from \cite{panov2017a}. 
Therefore,  \eqref{nu} holds with 
\begin{eqnarray}\label{brevep}
\breve{p}_r(x_1, ..., x_d)  =g(x_1, ..., x_{d-1} | U_d^{-1}(r)) \cdot p_{d}\left( x_d; U_{d}^{-1} (r) \right).\end{eqnarray}
 On the next step, we aim to find a function \(\vH\) and a r.v. \(\vD\) such that \(\vH(r,\vD)\) has  density \(\breve{p}_r.\)

\textbf{3.} Changing the variables we get 
\begin{multline}
 \label{g}
g(x_{1}, .., x_{d-1} \; | v)   = \; 	\int_{\R_{+}^{d-1}}
			p_1 \Bigl( x_{1};
	y_1		\Bigr) \cdot
			... \cdot
			p_{d-1} \Bigl( x_{d-1};y_{d-1}	\Bigr) \\
\times
\left. 	\frac{\partial^{d-1}  \widetilde{F}(u_1,..., u_{d-1} | v) }{\partial u_{1} \; ...\; \partial u_{d-1}}				 \right|_{\substack {u_{1}= U_{1}(y_{1}) \\ ... \\ u_{d-1}=U_{d-1}(y_{d-1})}} \;
 U'_{1}(y_{1}) ... U'_{d-1}(y_{d-1})
 dy_{1}.. dy_{d-1}
\end{multline}
The last expression yields that \(g(\cdot | v) \) is in fact a pdf of the random vector
\begin{eqnarray*}
\left(  
	\zeta_{1} (\chi^{(1)})^{1/ \alpha_1}, ...,	\zeta_{d-1} (\chi^{(d-1)})^{1/ \alpha_{d-1}}
\right)
\end{eqnarray*}
where \(\zeta_{j}, j=1..(d-1)\) are independent r.v.'s with distribution \(S_{\alpha_j}(\beta_j, \sigma_j, 0)\) resp., 
and
\[ \left( \chi^{(1)}, ..., \chi^{(d-1)} \right)
\eqd 
\left( U_{1}^{-1}(\tilde\chi^{(1)}), ..., U_{d-1}^{-1}(\tilde\chi^{(d-1)})\right),
\] 
with \(\left(\tilde\chi^{(1)},..., \tilde\chi^{(d-1)}\right)\) having a distribution function 
\(\tilde{F}(\cdot |v)\).  Moreover, due to the assumption (A2),
\begin{eqnarray*}
\Bigl(\tilde\chi^{(1)},..., \tilde\chi^{(d-1)}
\Bigr)
\eqd 
\Bigl(
  h_1 (\xi_1, v), ..., h_{d-1} (\xi_{d-1}, v)
\Bigr). 
\end{eqnarray*}
Finally, we  conclude that the representation \eqref{nu} is fulfilled with 
\begin{eqnarray*}
\vH(r, \vD) &:=&  \left( 
\begin{matrix}
\zeta_{1} \left( U_{1}^{-1}(h_{1}(\xi_1, r))\right)^{1/ \alpha_1}\\
... \\
\zeta_{d-1} \left( U_{d-1}^{-1}(h_{d-1}(\xi_{d-1}, r))\right)^{1/ \alpha_{d-1}}\\
\zeta_{d} 	\left( U_{d}^{-1}(r) \right)^{1/ \alpha_d}
 \end{matrix}
 \right),
 \\
 \vD &:=& \left(
  \zeta_1,..., \zeta_d, \xi_1, ..., \xi_{d-1} 
\right) \in \R^{2d -1},
 \end{eqnarray*}
 where \(\zeta_{j}, j=1..d\) are independent r.v.'s with distribution \(S_{\alpha_j}(\beta_j, \sigma_j, 0)\).  
 
 \textbf{4.} To complete the proof, we should check that the drift under the
 choice of \(\breve{p}_r\) by \eqref{brevep}, coincides with the drift of the subordinated stable process, that is, 
 \begin{eqnarray}\label{check}
  \int_{\R_{+}^{d}}\eta(d \vec{y}) \int_{|\vx| \leq 1} \vx \; \mu(d\vx, \vec{y})= \int_{\R_+} \int_{|x| \leq 1} \vx \breve{p}_{r}(\vx) d\vx \; dr
.
\end{eqnarray}
Using the same techniques as on steps 2 and 3, we can represent the left-hand side in \eqref{check} as follows:
\begin{eqnarray*}
  \int_{\R_{+}^{d}}\eta(d \vec{y}) 
 \int_{|\vx| \leq 1} \vx \; \mu(d\vx, \vec{y})
&=&  \int_{\R_{+}^{d}} 
\left( 
\int_{|\vx| \leq 1} \vx \; p_1(x_1, y_1) ... p_d(x_d, y_d)\; d\vx
\right)\\
&&\hspace{0.1cm} \times
  \left.\frac{\partial^{d} F}{\partial u_{1} \; ...\; \partial u_{d}}\right|_{\substack {u_{1}= U_{1}(y_{1}) \\ ... \\ u_{d}=U_{d}(y_{d})}}
 U'_{1}(y_{1}) ... U'_{d}(y_{d})\;   dy_1
 ...
 dy_d\\
 &=& \int_{\R_{+}^{d}} 
\left( 
\int_{|\vx| \leq 1} \vx \; p_1(x_1, U_1^{-1}(u_1)) ... p_d(x_d,  U_d^{-1}(u_d))\; d\vx
\right)\\
&&\hspace{3cm} \times
\frac{\partial^{d} F (u_1,..,u_d)}{\partial u_{1} \; ...\; \partial u_{d}}du_1
 ...
 du_d
 \\
 &=&
 \int_{\R_+}
 \int_{|\vx| \leq 1} \vx\; p_d(x_d, U_d^{-1}(u_d))
\\
&&
\hspace{3cm}
 \times g(x_1, ..., x_{d-1} | U_d^{-1}(u_d)) 
 d\vx
  du_d\\
 &=& \int_{\R_+} \int_{|x| \leq 1} \vx \breve{p}_{r}(\vx) d\vx \;  dr.
  \end{eqnarray*}
  This observation completes the proof. 

\bibliographystyle{spbasic}
\bibliography{Panov_bibliography}

\end{document}